\newtheorem{theorem}{Theorem}
\newtheorem{lemma}{Lemma}
\newtheorem{remark}{Remark}
\newtheorem{example}{Example}
\numberwithin{equation}{subsection}
\begin{document}
\author{L. E. Persson, G. Tephnadze, P. Wall}
\title[Nörlund means]{Some new $\left( H_{p},L_{p}\right) $ type
inequalities of maximal operators of Vilenkin-Nörlund means with
non-decreasing coefficients}
\address{L.-E. Persson, Department of Engineering Sciences and Mathematics,
Lule\aa\ University of Technology, SE-971 87 Lule\aa , Sweden and Narvik
University College, P.O. Box 385, N-8505, Narvik, Norway.}
\email{larserik@ltu.se}
\address{G. Tephnadze, Department of Mathematics, Faculty of Exact and
Natural Sciences, Tbilisi State University, Chavchavadze str. 1, Tbilisi
0128, Georgia}
\email{giorgitephnadze@gmail.com}
\address{P. Wall, Department of Engineering Sciences and Mathematics, Lule%
\aa\ University of Technology, SE-971 87, Lule\aa , Sweden.}
\email{Peter.Wall@ltu.se}
\thanks{The research was supported by a Swedish Institute scholarship,
provided within the framework of the SI Baltic Sea Region Cooperation/Visby
Programme.}
\date{}
\maketitle

\begin{abstract}
In this paper we prove and discuss some new $\left( H_{p},L_{p}\right) $
type inequalities of maximal operators of Vilenkin-Nörlund means with
non-decreasing coefficients. We also apply these inequalities to prove
strong convergence theorems of such Vilenkin-Nörlund means. These
inequalities are the best possible in a special sense. As applications, both
some well-known and new results are pointed out.
\end{abstract}

\bigskip \textbf{2000 Mathematics Subject Classification.} 42C10, 42B25.

\textbf{Key words and phrases:} Vilenkin systems, Vilenkin groups, Vilenkin-N%
örlund means, martingale Hardy spaces, $L_{p}$ spaces, maximal operator,
Vilenkin-Fourier series, strong convergence, inequalities.

\section{Introduction}

\bigskip\ The definitions and notations used in this introduction can be
found in our next Section. In the one-dimensional case the weak (1,1)-type
inequality for the maximal operator of Fejér means 
\begin{equation*}
\sigma ^{\ast }f:=\sup_{n\in \mathbb{N}}\left\vert \sigma _{n}f\right\vert
\end{equation*}%
can be found in Schipp \cite{Sc} for Walsh series and in Pál, Simon \cite{PS}
for bounded Vilenkin series. Fujji \cite{Fu} and Simon \cite{Si2} verified
that $\sigma ^{\ast }$ is bounded from $H_{1}$ to $L_{1}$. Weisz \cite{We2}
generalized this result and proved boundedness of $\ \sigma ^{\ast }$ \ from
the martingale space $H_{p}$ to the space $L_{p},$ for $p>1/2$. Simon \cite%
{Si1} gave a counterexample, which shows that boundedness does not hold for $%
0<p<1/2.$ A counterexample for $p=1/2$ was given by Goginava \cite{Go} (see
also \cite{tep1}). Moreover, Weisz \cite{We4} proved that the maximal
operator of the Fejér means $\sigma ^{\ast }$ is bounded from the Hardy
space $H_{1/2}$ to the space $weak-L_{1/2}$. In \cite{tep2} and \cite{tep3}
it was proved that the weighted maximal operator of Fejér means \ 
\begin{equation*}
\widetilde{\sigma }_{p}^{\ast }f:=\sup_{n\in \mathbb{N}_{+}}\frac{\left\vert
\sigma _{n}f\right\vert }{\left( n+1\right) ^{1/p-2}\log ^{2\left[ 1/2+p%
\right] }\left( n+1\right) }
\end{equation*}%
is bounded from the Hardy space $H_{p}$ to the space $L_{p},$ when $0<p\leq
1/2.$ Moreover, the rate of the weights $\left\{ 1/\left( n+1\right)
^{1/p-2}\log ^{2\left[ p+1/2\right] }\left( n+1\right) \right\}
_{n=1}^{\infty }$ in $n$-th Fejér mean was given exactly.

Móricz and Siddiqi \cite{Mor} investigated the approximation properties of
some special Nörlund means of Walsh-Fourier series of $L_{p}$ function in
norm. \ In the two-dimensional case approximation properties of Nörlund
means was considered by Nagy (see \cite{nag}-\cite{nagy}). In \cite{ptw} it
was proved that the maximal operator of Nörlund means 
\begin{equation*}
t^{\ast }f:=\sup_{n\in 
\mathbb{N}
}\left\vert t_{n}f\right\vert
\end{equation*}%
with non-decreasing coefficients is bounded from the Hardy space $H_{1/2}$
to the space $weak-L_{1/2}$. Moreover, there exists a martingale and Nörlund
means, with non-decreasing coefficients, such that it is not bounded from
the Hardy space $H_{p}$ to the space $weak-L_{p},$ when $0<p<1/2.$

It is well-known that Vilenkin systems do not form bases in the space $L_{1}$%
. Moreover, there is a function in the Hardy space $H_{1}$, such that the
partial sums of $f$ are not bounded in $L_{1}$-norm. Simon \cite{Si4} proved
that there exists an absolute constant $c_{p},$ depending only on $p,$ such
that the inequality 
\begin{equation*}
\frac{1}{\log ^{\left[ p\right] }n}\overset{n}{\underset{k=1}{\sum }}\frac{%
\left\Vert S_{k}f\right\Vert _{p}^{p}}{k^{2-p}}\leq c_{p}\left\Vert
f\right\Vert _{H_{p}}^{p}\ \ \ \left( 0<p\leq 1\right)
\end{equation*}%
holds for all $f\in H_{p}$ and $n\in \mathbb{N}_{+},$ where $\left[ p\right] 
$ denotes the integer part of $p.$ For $p=1$ analogous results with respect
to more general systems were proved in \cite{b} and \cite{Ga1} and for $%
0<p<1 $ another proof can be found in \cite{tep8}.

In \cite{bt1} it was proved that there exists an absolute constant $c_{p}$,
depending only on $p$, such that the inequality 
\begin{equation}
\frac{1}{\log ^{\left[ 1/2+p\right] }n}\overset{n}{\underset{k=1}{\sum }}%
\frac{\left\Vert \sigma _{k}f\right\Vert _{p}^{p}}{k^{2-2p}}\leq
c_{p}\left\Vert f\right\Vert _{H_{p}}^{p}\ \ \ \left( 0<p\leq 1/2,\
n=2,3,\dots \right) .  \label{3cc}
\end{equation}%
holds. An analogous result for the Walsh system can be found in \cite{tep9}.

In this paper we derive some new $(H_{p},L_{p})$-type inequalities for
weighted maximal operators of Nörlund means with non-decreasing
coefficients. Moreover, we prove strong convergence theorems of such Nörlund
means.

This paper is organized as follows: In order not to disturb our discussions
later on some definitions and notations are presented in Section 2. The main
results and some of its consequences can be found in Section 3. For the
proofs of the main results we need some auxiliary Lemmas, some of them are
new and of independent interest. These results are presented in Section 4.
The detailed proofs are given in Section 5.

\section{Definitions and Notation}

Denote by $%
\mathbb{N}
_{+}$ the set of the positive integers, $%
\mathbb{N}
:=%
\mathbb{N}
_{+}\cup \{0\}.$ Let $m:=(m_{0,}$ $m_{1},...)$ be a sequence of the positive
integers not less than 2. Denote by 
\begin{equation*}
Z_{m_{k}}:=\{0,1,...,m_{k}-1\}
\end{equation*}
the additive group of integers modulo $m_{k}$.

Define the group $G_{m}$ as the complete direct product of the groups $%
Z_{m_{i}}$ with the product of the discrete topologies of $Z_{m_{j}}`$s.

The direct product $\mu $ of the measures 
\begin{equation*}
\mu _{k}\left( \{j\}\right) :=1/m_{k}\text{ \ \ \ }(j\in Z_{m_{k}})
\end{equation*}%
is the Haar measure on $G_{m_{\text{ }}}$with $\mu \left( G_{m}\right) =1.$

In this paper we discuss bounded Vilenkin groups,\textbf{\ }i.e. the case
when $\sup_{n}m_{n}<\infty .$

The elements of $G_{m}$ are represented by sequences 
\begin{equation*}
x:=\left( x_{0},x_{1},...,x_{j},...\right) ,\ \left( x_{j}\in
Z_{m_{j}}\right) .
\end{equation*}

Set $e_{n}:=\left( 0,...,0,1,0,...\right) \in G,$ the $n-$th coordinate of
which is 1 and the rest are zeros $\left( n\in 
\mathbb{N}
\right) .$

It is easy to give a basis for the neighborhoods of $G_{m}:$ 
\begin{equation*}
I_{0}\left( x\right) :=G_{m},\text{ \ }I_{n}(x):=\{y\in G_{m}\mid
y_{0}=x_{0},...,y_{n-1}=x_{n-1}\},
\end{equation*}%
where $x\in G_{m},$ $n\in 
\mathbb{N}
.$

If we define $I_{n}:=I_{n}\left( 0\right) ,$\ for \ $n\in \mathbb{N}$ and $\ 
\overline{I_{n}}:=G_{m}$ $\backslash $ $I_{n},$ then%
\begin{equation}
\overline{I_{N}}=\left( \overset{N-2}{\underset{k=0}{\bigcup }}\overset{N-1}{%
\underset{l=k+1}{\bigcup }}I_{N}^{k,l}\right) \bigcup \left( \underset{k=1}{%
\bigcup\limits^{N-1}}I_{N}^{k,N}\right) ,  \label{1.1}
\end{equation}%
where 
\begin{equation*}
I_{N}^{k,l}:=\left\{ 
\begin{array}{l}
\text{ }I_{N}(0,...,0,x_{k}\neq 0,0,...,0,x_{l}\neq 0,x_{l+1\text{ }%
},...,x_{N-1\text{ }},...),\text{ \ for }k<l<N, \\ 
\text{ }I_{N}(0,...,0,x_{k}\neq 0,0,...,,x_{N-1\text{ }}=0,\text{ }x_{N\text{
}},...),\text{ \ \ \ \ \ \ \ \ \ \ \ \ \ \ \ for }l=N.%
\end{array}%
\text{ }\right.
\end{equation*}

\bigskip If we define the so-called generalized number system based on $m$
in the following way : 
\begin{equation*}
M_{0}:=1,\ M_{k+1}:=m_{k}M_{k}\,\,\,\ \ (k\in 
\mathbb{N}
),
\end{equation*}%
then every $n\in 
\mathbb{N}
$ can be uniquely expressed as $n=\sum_{j=0}^{\infty }n_{j}M_{j},$ where $%
n_{j}\in Z_{m_{j}}$ $(j\in 
\mathbb{N}
_{+})$ and only a finite number of $n_{j}`$s differ from zero.

We introduce on $G_{m}$ an orthonormal system which is called the Vilenkin
system. At first, we define the complex-valued function $r_{k}\left(
x\right) :G_{m}\rightarrow 
\mathbb{C}
,$ the generalized Rademacher functions, by%
\begin{equation*}
r_{k}\left( x\right) :=\exp \left( 2\pi ix_{k}/m_{k}\right) ,\text{ }\left(
i^{2}=-1,x\in G_{m},\text{ }k\in 
\mathbb{N}
\right) .
\end{equation*}

Next, we define the Vilenkin system$\,\,\,\psi :=(\psi _{n}:n\in 
\mathbb{N}
)$ on $G_{m}$ by: 
\begin{equation*}
\psi _{n}(x):=\prod\limits_{k=0}^{\infty }r_{k}^{n_{k}}\left( x\right)
,\,\,\ \ \,\left( n\in 
\mathbb{N}
\right) .
\end{equation*}

Specifically, we call this system the Walsh-Paley system when $m\equiv 2.$

The norms (or quasi-norms) of the spaces $L_{p}(G_{m})$ and $%
weak-L_{p}\left( G_{m}\right) $ $\left( 0<p<\infty \right) $ are
respectively defined by 
\begin{equation*}
\left\Vert f\right\Vert _{p}^{p}:=\int_{G_{m}}\left\vert f\right\vert
^{p}d\mu ,\text{ }\left\Vert f\right\Vert _{weak-L_{p}}^{p}:=\underset{%
\lambda >0}{\sup }\lambda ^{p}\mu \left( f>\lambda \right) <+\infty .
\end{equation*}%
\qquad

The Vilenkin system is orthonormal and complete in $L_{2}\left( G_{m}\right) 
$ (see \cite{Vi}).

Now, we introduce analogues of the usual definitions in Fourier-analysis. If 
$f\in L_{1}\left( G_{m}\right) $ we can define Fourier coefficients, partial
sums of the Fourier series and Dirichlet kernels with respect to the
Vilenkin system in the usual manner: 
\begin{equation*}
\widehat{f}\left( n\right) :=\int_{G_{m}}f\overline{\psi }_{n}d\mu \,\ \ \ \
\ \ \,\left( n\in 
\mathbb{N}
\right) ,
\end{equation*}%
\begin{equation*}
S_{n}f:=\sum_{k=0}^{n-1}\widehat{f}\left( k\right) \psi _{k},\text{ \ \ }%
D_{n}:=\sum_{k=0}^{n-1}\psi _{k\text{ }},\text{ \ \ }\left( n\in 
\mathbb{N}
_{+}\right) ,
\end{equation*}%
respectively.

The $\sigma $-algebra generated by the intervals $\left\{ I_{n}\left(
x\right) :x\in G_{m}\right\} $ will be denoted by $\digamma _{n}\left( n\in 
\mathbb{N}
\right) .$ Denote by $f=\left( f^{\left( n\right) },n\in 
\mathbb{N}
\right) $ a martingale with respect to $\digamma _{n}\left( n\in 
\mathbb{N}
\right) .$ (for details see e.g. \cite{We1}).

The maximal function of a martingale $f$ \ is defined by 
\begin{equation*}
f^{\ast }=\sup_{n\in 
\mathbb{N}
}\left\vert f^{(n)}\right\vert .
\end{equation*}

For $0<p<\infty $ \ the Hardy martingale spaces $H_{p}$ consist of all
martingales $f$ for which 
\begin{equation*}
\left\Vert f\right\Vert _{H_{p}}:=\left\Vert f^{\ast }\right\Vert
_{p}<\infty .
\end{equation*}

If $f=\left( f^{\left( n\right) },n\in 
\mathbb{N}
\right) $ is a martingale, then the Vilenkin-Fourier coefficients must be
defined in a slightly different manner: 
\begin{equation*}
\widehat{f}\left( i\right) :=\lim_{k\rightarrow \infty
}\int_{G_{m}}f^{\left( k\right) }\overline{\psi }_{i}d\mu .
\end{equation*}

Let $\{q_{k}:k\geq 0\}$ be a sequence of nonnegative numbers. The $n$-th Nö%
rlund mean $t_{n}$ for a Fourier series of $f$ \ is defined by 
\begin{equation}
t_{n}f=\frac{1}{Q_{n}}\overset{n}{\underset{k=1}{\sum }}q_{n-k}S_{k}f,
\label{nor}
\end{equation}%
where $Q_{n}:=\sum_{k=0}^{n-1}q_{k}.$ \ 

We always assume that $q_{0}>0$ and $\ \lim_{n\rightarrow \infty
}Q_{n}=\infty .$\ \ In this case it is well-known that the summability
method generated by $\{q_{k}:k\geq 0\}$ is regular if and only if 
\begin{equation*}
\lim_{n\rightarrow \infty }\frac{q_{n-1}}{Q_{n}}=0.
\end{equation*}

Concerning this fact and related basic results we refer to \cite{moo}. In
this paper we consider regular Nörlund means only.

If $q_{k}\equiv 1,$ we respectively define the Fejér means $\sigma _{n}$ and
Kernels $K_{n}$ as follows: 
\begin{equation*}
\sigma _{n}f:=\frac{1}{n}\sum_{k=1}^{n}S_{k}f\,,\text{ \ \ }K_{n}:=\frac{1}{n%
}\sum_{k=1}^{n}D_{k}.
\end{equation*}

It is well-known that (see \cite{AVD})%
\begin{equation}
n\left\vert K_{n}\right\vert \leq c\sum_{l=0}^{\left\vert n\right\vert
}M_{l}\left\vert K_{M_{l}}\right\vert  \label{fn5}
\end{equation}%
\thinspace and%
\begin{equation}
\left\Vert K_{n}\right\Vert _{1}\leq c<\infty .  \label{fn6}
\end{equation}

Denote%
\begin{equation*}
\log ^{(0)}x=x\text{ \ \ and \ }\log ^{(\beta )}x:=\overset{\beta \text{
times}}{\overbrace{\log ...\log }}x,\text{ for }\beta \in 
\mathbb{N}
_{+}.
\end{equation*}

Let $\alpha \in 
\mathbb{R}
_{+},$ $\beta \in 
\mathbb{N}
_{+}$ and $\left\{ q_{k}=\log ^{\left( \beta \right) }k^{\alpha }:k\geq
0\right\} .$ Then we get the class of Nörlund means, with non-decreasing
coefficients: 
\begin{equation*}
\theta _{n}f:=\frac{1}{Q_{n}}\sum_{k=1}^{n}\log ^{\left( \beta \right)
}\left( n-k\right) ^{\alpha }S_{k}f,
\end{equation*}%
where%
\begin{equation*}
Q_{n}=\sum_{k=1}^{n-1}\log ^{\left( \beta \right) }\left( n-k\right)
^{\alpha }=\sum_{k=1}^{n-1}\log ^{\left( \beta \right) }k^{\alpha }=\log
\prod\limits_{k=1}^{n-1}\log ^{\left( \beta -1\right) }k^{\alpha }
\end{equation*}%
\begin{equation*}
\geq \log \left( \log ^{\left( \beta -1\right) }\left( \frac{n-1}{2}\right)
^{\alpha }\right) ^{\frac{\left( n-1\right) }{2}}\geq \frac{n}{4}\log \log
^{\left( \beta -1\right) }\left( \frac{n-1}{2}\right) ^{\alpha }\sim n\log
^{\left( \beta \right) }n^{\alpha }.
\end{equation*}

It follows that%
\begin{equation*}
\frac{q_{n-1}}{Q_{n}}\leq \frac{c\log ^{\left( \beta \right) }\left(
n-1\right) ^{\alpha }}{n\log ^{\beta }n^{\alpha }}=O\left( \frac{1}{n}%
\right) \rightarrow 0,\text{ as \ }n\rightarrow \infty .
\end{equation*}

Finally, we say that a bounded measurable function $a$ is a p-atom, if there
exists a interval $I$, such that%
\begin{equation*}
\int_{I}ad\mu =0,\text{ \ \ }\left\Vert a\right\Vert _{\infty }\leq \mu
\left( I\right) ^{-1/p},\text{ \ \ supp}\left( a\right) \subset I.
\end{equation*}

\section{The Main Results and Applications}

Our first main result reads:

\begin{theorem}
\label{theorem2fejerstrong}a) Let $0<p<1/2,$ $f\in H_{p}$ and $\{q_{k}:k\geq
0\}$ be a sequence of non-decreasing numbers. Then there exists an absolute
constant $c_{p},$ depending only on $p,$ such that the inequality%
\begin{equation*}
\overset{\infty }{\underset{k=1}{\sum }}\frac{\left\Vert t_{k}f\right\Vert
_{p}^{p}}{k^{2-2p}}\leq c_{p}\left\Vert f\right\Vert _{H_{p}}^{p}
\end{equation*}%
holds.

b)Let $f\in H_{1/2}$ and $\{q_{k}:k\geq 0\}$ be a sequence of non-decreasing
numbers, satisfying the condition 
\begin{equation}
\frac{q_{n-1}}{Q_{n}}=O\left( \frac{1}{n}\right) ,\text{ \ \ as \ \ }\
n\rightarrow \infty .  \label{fn01}
\end{equation}%
Then there exists an absolute constant $c,$ such that the inequality%
\begin{equation}
\frac{1}{\log n}\overset{n}{\underset{k=1}{\sum }}\frac{\left\Vert
t_{k}f\right\Vert _{1/2}^{1/2}}{k}\leq c\left\Vert f\right\Vert
_{H_{1/2}}^{1/2}  \label{7nor}
\end{equation}%
holds.
\end{theorem}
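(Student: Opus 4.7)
The plan is to adapt to N\"orlund means the atomic-decomposition strategy used for the Fej\'er-means inequality \eqref{3cc} in \cite{bt1}. First I would invoke the atomic characterization of the martingale Hardy space $H_p$: every $f\in H_p$ admits a decomposition $f=\sum_{j}\mu_j a_j$ with $p$-atoms $a_j$ supported in intervals $I_{N_j}$ and $\sum_{j}|\mu_j|^p\le c\|f\|_{H_p}^p$. Because $0<p\le 1/2\le 1$, the functional $g\mapsto\|g\|_p^p$ is sub-additive while $f\mapsto t_k f$ is linear, so in both (a) and (b) it suffices to establish the corresponding inequality uniformly in $a$ for a single $p$-atom $a$ satisfying $\supp(a)\subset I_N$, $\|a\|_{\infty}\le M_N^{1/p}$ and $\int a\,d\mu=0$.

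For such an atom one has $\widehat{a}(i)=0$ whenever $i<M_N$, since $\psi_i$ is $\digamma_N$-measurable (depending only on $x_0,\ldots,x_{N-1}$) and $a$ has mean zero; hence $S_k a=0$ and $t_k a=0$ for every $k\le M_N$, and the sums in (a) and (b) effectively run over $k>M_N$. For such $k$ I would write $t_k a(x)=\int_{I_N}a(t)F_k(x-t)\,d\mu(t)$ with the N\"orlund kernel $F_k:=Q_k^{-1}\sum_{j=1}^{k}q_{k-j}D_j$, and split
\begin{equation*}
\|t_k a\|_p^p=\int_{I_N}|t_k a|^p\,d\mu+\int_{\overline{I_N}}|t_k a|^p\,d\mu.
\end{equation*}
On $I_N$, the $L_2$-contractivity $\|t_k a\|_2\le\|a\|_2\le M_N^{1/p-1/2}$ together with H\"older's inequality gives the uniform bound $\int_{I_N}|t_k a|^p\,d\mu\le c$; its contribution to (a) is then controlled by $\sum_{k>M_N}k^{-(2-2p)}\le cM_N^{-(1-2p)}\le c$, and to (b) by $\log(n/M_N)/\log n\le 1$. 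On $\overline{I_N}$, I would apply the decomposition \eqref{1.1} into the pieces $I_N^{r,l}$ and insert, on each piece, pointwise estimates for $F_k$ supplied by auxiliary lemmas in Section~4; these lemmas should follow from an Abel-summation rewriting of $F_k$ as a weighted combination of Fej\'er kernels $K_{M_l}$, combined with \eqref{fn5}, \eqref{fn6} and the regularity of the method.

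Summing these pieces first over $0\le r<l\le N$ and then over $k>M_N$ against the weight $k^{-(2-2p)}$ (respectively $(k\log n)^{-1}$) should then produce a bound independent of $N$ and $n$. For $0<p<1/2$ the exponent $2-2p>1$ makes the tail summable and yields (a); at the borderline $p=1/2$ the remaining partial sum behaves like $\log(n/M_N)\le\log n$, which is exactly absorbed by the prefactor $1/\log n$, yielding (b).

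The principal obstacle is not the atomic reduction, which is routine, but securing sufficiently sharp pointwise estimates for $F_k$ on each $I_N^{r,l}$ for \emph{general} non-decreasing $\{q_j\}$. Whereas the classical Fej\'er kernel is directly controlled by \eqref{fn5}--\eqref{fn6}, a general $F_k$ must first be rearranged by Abel summation into Fej\'er kernels before the right polynomial-type decay in the parameters $M_r,M_l,M_N,k$ emerges; verifying that at the borderline $p=1/2$ the extra regularity \eqref{fn01} just suffices to furnish the sharp constants is the technical heart of the argument.
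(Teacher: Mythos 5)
Your architecture coincides with the paper's: the atomic reduction, the observation that $S_ka=t_ka=0$ for $k\le M_N$, the split of $\|t_ka\|_p^p$ into the $I_N$ and $\overline{I_N}$ parts, the covering \eqref{1.1}, kernel estimates via Abel summation plus \eqref{fn5}, and the final bookkeeping in which $2-2p>1$ closes part (a) while the prefactor $1/\log n$ absorbs the logarithm at $p=1/2$. Your $L_2$--H\"older treatment of the $I_N$ term is a harmless variant of the paper's $L_\infty$ argument.

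The gap sits exactly where you flag the ``technical heart,'' and the route you sketch there does not work for part (a). Abel summation of the \emph{full} kernel $F_k=Q_k^{-1}\sum_{j=1}^kq_{k-j}D_j$, followed by \eqref{fn5} and telescoping of the monotone differences, yields
\begin{equation*}
|F_k|\le c\,\frac{q_{k-1}}{Q_k}\sum_{j=0}^{|k|}M_j\left\vert K_{M_j}\right\vert ,
\end{equation*}
and the factor $q_{k-1}/Q_k$ is $O(1/k)$ only under the extra hypothesis \eqref{fn01}; regularity alone gives merely $o(1)$, and for non-decreasing sequences such as $q_k=2^{\sqrt{k}}$ the ratio decays like $1/\sqrt{k}$. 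Since part (a) assumes nothing beyond monotonicity, this estimate is simply unavailable there. The paper's way out, absent from your plan, is that the atom annihilates $S_j$ for $j\le M_N$, so only the truncated kernel $Q_k^{-1}\sum_{j=M_N}^{k}q_{k-j}D_j$ ever acts on $a$; for this truncated sum monotonicity alone gives $M_Nq_{k-M_N-1}\le Q_k$, hence the bound $\frac{c}{M_N}\sum_jM_j|K_{M_j}|$ of Lemmas \ref{lemma00nn} and \ref{lemma5aaa} --- uniform in $k$, with no decay in $k$ and no need for \eqref{fn01} --- after which the convergent tail $\sum_{k>M_N}k^{-(2-2p)}\approx M_N^{-(1-2p)}$ exactly offsets the resulting factor $M_N^{1-2p}$. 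The genuine $c/k$ decay of the full kernel (Lemmas \ref{lemma0nn} and \ref{lemma5}), which does require \eqref{fn01}, is needed only at $p=1/2$, where it must beat the extra factor $N\sim\log M_N$ produced by summing over the pieces $I_N^{k,l}$ with $l<N$. Until you separate these two mechanisms and prove the truncated-kernel estimate, part (a) is not established.
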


\begin{example}
Let $0<p\leq 1/2,$ $f\in H_{p}$ and $\{q_{k}:k\geq 0\}$ be a sequence of
non-decreasing numbers, such that%
\begin{equation*}
\sup_{n}q_{n}<c<\infty .
\end{equation*}%
Then 
\begin{equation*}
\frac{q_{n-1}}{Q_{n}}\leq \frac{c}{Q_{n}}\leq \frac{c}{q_{0}n}=\frac{c_{1}}{n%
}=O\left( \frac{1}{n}\right) ,\text{ as }n\rightarrow 0,
\end{equation*}%
i.e. condition (\ref{fn01}) is satisfied and for such Nörlund means there
exists an absolute constant $c,$ such that the inequality (\ref{7nor}) holds.
\end{example}

\begin{example}
Let $0<p\leq 1/2$ and $f\in H_{p}.$ Then there exists absolute constant $%
c_{p},$ depending only on $p,$ such that the following inequality holds: 
\begin{equation*}
\frac{1}{\log ^{\left[ 1/2+p\right] }n}\overset{n}{\underset{k=1}{\sum }}%
\frac{\left\Vert \sigma _{k}f\right\Vert _{p}^{p}}{k^{2-2p}}\leq
c_{p}\left\Vert f\right\Vert _{H_{p}}^{p}.
\end{equation*}
\end{example}

\begin{remark}
This result for the Walsh system can be found in \cite{tep9} and for any
bounbed Vilenkin system in \cite{bt1}.
\end{remark}

We have already considered the case when the sequence $\{q_{k}:k\geq 0\}$ is
bounded. Now, we consider some Nörlund means, which are generated by a
unbounded sequence $\{q_{k}:k\geq 0\}.$

\begin{example}
Let $0<p\leq 1/2$ and $f\in H_{p}.$ Then there exists an absolute constant $%
c_{p},$ depending only on $p,$ such that the following inequality holds: 
\begin{equation*}
\frac{1}{\log ^{\left[ 1/2+p\right] }n}\overset{n}{\underset{k=1}{\sum }}%
\frac{\left\Vert \theta _{k}f\right\Vert _{p}^{p}}{k^{2-2p}}\leq
c_{p}\left\Vert f\right\Vert _{H_{p}}^{p}.
\end{equation*}
\end{example}

Up to now we have considered strong convergence theorems in the case $%
0<p\leq 1/2,$\ but in our next main result we consider boundedness of
weighed maximal operators of Nörlund means when $0<p\leq 1/2,$ and now
without any restriction like (\ref{fn01}).

\begin{theorem}
\label{theorem3fejermax2}Let $0<p\leq 1/2,$ $f\in H_{p}$ and $\{q_{k}:k\geq
0\}$ be a sequence of non-decreasing numbers. Then the maximal operator \ 
\begin{equation*}
\widetilde{t}_{p}^{\ast }f:=\sup_{n\in \mathbb{N}_{+}}\frac{\left\vert
t_{n}f\right\vert }{\left( n+1\right) ^{1/p-2}\log ^{2\left[ 1/2+p\right]
}\left( n+1\right) }
\end{equation*}%
is bounded from the Hardy space $H_{p}$ to the space $L_{p}.$
\end{theorem}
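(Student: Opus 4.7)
The natural strategy is atomic decomposition of $H_p$. Since $\widetilde{t}_p^*$ is sublinear and the atomic decomposition gives $f=\sum_i\lambda_i a_i$ with $\sum|\lambda_i|^p\le c\|f\|_{H_p}^p$, the theorem reduces to showing that $\|\widetilde{t}_p^* a\|_p\le c_p$ uniformly for every $p$-atom $a$. Fix such an atom supported on $I_N$ with $\|a\|_\infty\le M_N^{1/p}$ and $\int_{I_N}a\,d\mu=0$. The first cheap gain is that $\psi_k$ is constant on $I_N$ for $k<M_N$, so $\widehat{a}(k)=0$ and hence $S_k a\equiv 0$ for $k\le M_N$. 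Consequently $t_n a\equiv 0$ whenever $n\le M_N$, and in the definition of $\widetilde{t}_p^* a$ only $n>M_N$ matters.

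\textbf{Estimate on $I_N$.} I would use a uniform $L^1$-bound on the N\"orlund kernel $F_n=\frac{1}{Q_n}\sum_{k=1}^n q_{n-k}D_k$, expected to be one of the auxiliary lemmas of Section~4 (proved for non-decreasing $\{q_k\}$ by Abel summation combined with (\ref{fn5})--(\ref{fn6})), to get the crude bound $|t_n a(x)|\le\|a\|_\infty\|F_n\|_1\le cM_N^{1/p}$. Since $n>M_N$, the weight denominator is at least $M_N^{1/p-2}\log^{2[1/2+p]}M_N$, so on $I_N$
\begin{equation*}
\widetilde{t}_p^*a(x)\le \frac{cM_N^{1/p}}{M_N^{1/p-2}\log^{2[1/2+p]}M_N}=\frac{cM_N^2}{\log^{2[1/2+p]}M_N}.
\end{equation*}
Raising to the $p$-th power, multiplying by $\mu(I_N)=M_N^{-1}$, and using $p\le 1/2$ (so that $2p-1\le 0$, and the case $p=1/2$ picks up the compensating $\log^2$) yields $\int_{I_N}(\widetilde{t}_p^*a)^p\,d\mu\le c$ uniformly in $N$.

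\textbf{Estimate on $\overline{I_N}$.} This is the substantial part. I decompose $\overline{I_N}$ via (\ref{1.1}) into the pieces $I_N^{k,l}$, and on each piece I invoke pointwise kernel estimates of the shape
\begin{equation*}
|t_n a(x)|\le \frac{cM_k M_l M_N^{1/p-1}}{n}\quad (k<l<N),\qquad |t_n a(x)|\le c M_k M_N^{1/p-1}\quad (l=N),
\end{equation*}
derived by writing $t_n a(x)=\int_{I_N}a(t)F_n(x-t)\,d\mu(t)$ and controlling $F_n$ on these pieces by combinations of the basic Fej\'er kernels $K_{M_j}$ (using (\ref{fn5}), (\ref{fn6}) together with the auxiliary lemmas for non-decreasing coefficients). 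Dividing by the weight $(n+1)^{1/p-2}\log^{2[1/2+p]}(n+1)$ and taking $\sup_{n>M_N}$ gives pointwise bounds on $\widetilde{t}_p^*a$ on $I_N^{k,l}$ that are independent of $n$. Integrating the $p$-th power against $\mu(I_N^{k,l})$ and summing in $k,l$ as in (\ref{1.1}) finishes the argument.

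\textbf{Main obstacle.} The crux is the pointwise estimate for the N\"orlund kernel on $I_N^{k,l}$ with \emph{non-decreasing} $\{q_k\}$. For non-increasing weights the Abel transform immediately writes $F_n$ as a convex combination of Fej\'er kernels and (\ref{fn5})--(\ref{fn6}) apply; for non-decreasing weights the sign pattern reverses, so one has to exploit cancellation more carefully. This is exactly where the auxiliary lemmas of Section~4 are essential, and the specific weight exponents $1/p-2$ and $\log^{2[1/2+p]}(n+1)$ in $\widetilde{t}_p^*$ are calibrated precisely so that, after dividing by them, the geometric sums over the decomposition (\ref{1.1}) converge.
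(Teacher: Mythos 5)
Your outline follows the paper's proof of this theorem essentially step by step: reduce to $p$-atoms (the paper does this via Lemma \ref{lemma2.2}, whose $L_{\infty}\to L_{\infty}$ hypothesis in fact disposes of the integral over $I_{N}$ automatically, so your separate estimate there is correct but redundant), use $S_{j}a=0$ for $j\leq M_{N}$, decompose $\overline{I_{N}}$ by (\ref{1.1}), and prove a pointwise kernel bound on each $I_{N}^{k,l}$ whose $p$-th power sums geometrically. The key auxiliary fact you appeal to is exactly the paper's Lemma \ref{lemma5aaa}, and your final bounds per piece agree with the paper's (\ref{1112}).

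Two corrections are needed, though neither destroys the argument. First, the intermediate bound $\left\vert t_{n}a\left( x\right) \right\vert \leq cM_{k}M_{l}M_{N}^{1/p-1}/n$ that you claim for $k<l<N$ is \emph{not} available for general non-decreasing $\{q_{k}\}$: the $1/n$ decay of the N\"orlund kernel requires the regularity condition (\ref{fn01}) (take $q_{k}=2^{k}$, for which $q_{n-1}/Q_{n}\rightarrow 1/2$), and that condition is assumed in Theorem \ref{theorem2fejerstrong} b) but deliberately \emph{not} in this theorem. What is provable without (\ref{fn01}) --- and what Lemma \ref{lemma5aaa} gives, using that only the tail $\frac{1}{Q_{n}}\sum_{j=M_{N}}^{n}q_{n-j}D_{j}$ acts on the atom --- is the $n$-independent bound $\left\vert t_{n}a\left( x\right) \right\vert \leq cM_{k}M_{l}M_{N}^{1/p-2}$ for all $0\leq k<l\leq N$. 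Since you only invoke your bound for $n>M_{N}$ and then divide by $\left( n+1\right) ^{1/p-2}\log ^{2\left[ 1/2+p\right] }\left( n+1\right) \geq cM_{N}^{1/p-2}N^{2\left[ 1/2+p\right] }$, this weaker estimate yields the same final bound $cM_{k}M_{l}/N^{2\left[ 1/2+p\right] }$ on each piece, so the proof survives, but the intermediate claim must be weakened. Second, your ``main obstacle'' remark has the signs backwards: for \emph{non-decreasing} $\{q_{k}\}$ the Abel differences $q_{n-j}-q_{n-j-1}$ are nonnegative, so $F_{n}$ is a nonnegative combination of the kernels $jK_{j}$ normalized by $Q_{n}$ and $\left\Vert F_{n}\right\Vert _{1}\leq c$ comes for free from (\ref{fn6}); the genuine difficulty is not a sign reversal but the possible failure of $q_{n-1}/Q_{n}=O\left( 1/n\right) $, which is precisely why the maximal operator must carry the weight $\left( n+1\right) ^{1/p-2}$ calibrated to the worst case $n\approx M_{N}$.
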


\begin{example}
Let $0<p\leq 1/2,$ $f\in H_{p}$ and $\{q_{k}:k\geq 0\}$ be a sequence of
non-decreasing numbers. Then the maximal operator \ 
\begin{equation*}
\widetilde{\sigma }_{p}^{\ast }f:=\sup_{n\in \mathbb{N}_{+}}\frac{\left\vert
\sigma _{n}f\right\vert }{\left( n+1\right) ^{1/p-2}\log ^{2\left[ 1/2+p%
\right] }\left( n+1\right) }
\end{equation*}%
is bounded from the Hardy space $H_{p}$ to the space $L_{p}.$
\end{example}

\begin{remark}
This result for the Walsh system when $p=1/2$ can be found in \cite{gog2}.
Later on, it was generalized for bounded Vilenkin systems in \cite{tep2}.
The case $0<p<1/2$ can be found in \cite{tep3}. Analogous results with
respect to Walsh-Kachmarz systems were considered in \cite{gn} for $p=1/2$
and in \cite{tep4} for $0<p<1/2.$
\end{remark}

\begin{example}
Let $0<p\leq 1/2,$ $f\in H_{p}$ and $\{q_{k}:k\geq 0\}$ be a sequence of
non-decreasing numbers. Then the maximal operator%
\begin{equation*}
\widetilde{\theta }_{p}^{\ast }f:=\sup_{n\in \mathbb{N}_{+}}\frac{\left\vert
\theta _{n}f\right\vert }{\left( n+1\right) ^{1/p-2}\log ^{2\left[ 1/2+p%
\right] }\left( n+1\right) }
\end{equation*}%
is bounded from the Hardy space $H_{p}$ to the space $L_{p}.$
\end{example}

\section{Auxiliary lemmas}

We need the following auxiliary Lemmas:

\begin{lemma}[see e.g. \protect\cite{We3}]
\label{lemma2.1} A martingale $f=\left( f^{\left( n\right) },n\in 
\mathbb{N}
\right) $ is in $H_{p}\left( 0<p\leq 1\right) $ if and only if there exists
a sequence $\left( a_{k},k\in 
\mathbb{N}
\right) $ of p-atoms and a sequence $\left( \mu _{k},k\in 
\mathbb{N}
\right) $ of real numbers such that, for every $n\in 
\mathbb{N}
,$ 
\begin{equation}
\qquad \sum_{k=0}^{\infty }\mu _{k}S_{M_{n}}a_{k}=f^{\left( n\right) },\text{
\ \ a.e.}  \label{1}
\end{equation}%
and%
\begin{equation*}
\sum_{k=0}^{\infty }\left\vert \mu _{k}\right\vert ^{p}<\infty .
\end{equation*}

\textit{Moreover,} 
\begin{equation*}
\left\Vert f\right\Vert _{H_{p}}\backsim \inf \left( \sum_{k=0}^{\infty
}\left\vert \mu _{k}\right\vert ^{p}\right) ^{1/p}
\end{equation*}
\textit{where the infimum is taken over all decompositions of} $f$ \textit{%
of the form} (\ref{1}).
\end{lemma}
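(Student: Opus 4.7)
The plan is to establish this classical atomic decomposition (due to Weisz) by two complementary arguments. For the sufficiency direction, suppose we are given $f^{(n)} = \sum_k \mu_k S_{M_n} a_k$ a.e.\ with $\sum_k |\mu_k|^p < \infty$. I would first show that every $p$-atom $a$ supported on an interval $I$ satisfies the uniform estimate $\| \sup_n |S_{M_n} a| \|_p^p \leq c_p$. This uses that $S_{M_n}$ coincides with conditional expectation onto $\digamma_n$, that $\| a \|_\infty \leq \mu(I)^{-1/p}$, that $S_{M_n} a \equiv 0$ off $I$ once $M_n \geq \mu(I)^{-1}$ (the off-diagonal vanishing of the Dirichlet kernels $D_{M_n}$), and that the mean-zero property of $a$ kills the small-$n$ contributions. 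Combined with the $p$-subadditivity $\|u+v\|_p^p \leq \|u\|_p^p + \|v\|_p^p$ valid for $0<p\leq 1$, this yields $\|f\|_{H_p}^p \leq c_p \sum_k |\mu_k|^p$.

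For the converse direction, given $f \in H_p$ I would construct an atomic decomposition by stopping times. Define
\begin{equation*}
\nu_k := \inf \{ n \in \mathbb{N} : |f^{(n)}| > 2^k \}, \qquad \inf \emptyset := \infty,
\end{equation*}
which is a stopping time with respect to $(\digamma_n)$. Because each $\digamma_n$ is generated by the Vilenkin intervals $I_n(x)$, the set $\{\nu_k < \infty\}$ decomposes as a disjoint union of maximal $\digamma$-measurable intervals $\bigsqcup_j I_{k,j}$. The stopped differences
\begin{equation*}
\phi_k^{(n)} := f^{(n \wedge \nu_{k+1})} - f^{(n \wedge \nu_k)}
\end{equation*}
are martingales, telescope in $k$ to $f^{(n)}$, vanish off $\{\nu_k < \infty\}$, and satisfy $\| \phi_k^{(n)} \|_\infty \leq c \cdot 2^k$ by the definition of $\nu_{k+1}$.

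I would then localize: set $\mu_{k,j} := c \cdot 2^k \mu(I_{k,j})^{1/p}$ and $a_{k,j}^{(n)} := \mu_{k,j}^{-1} \phi_k^{(n)} \mathbf{1}_{I_{k,j}}$. Since $\phi_k$ restricted to $I_{k,j}$ is a martingale starting from zero, one verifies $\int_{I_{k,j}} a_{k,j}\, d\mu = 0$, which together with the interval support and the $L^\infty$-bound shows that $a_{k,j}$ is a $p$-atom. Writing $f^{(n)} = \sum_{k,j} \mu_{k,j} S_{M_n} a_{k,j}$ (the operator $S_{M_n}$ entering because the $a_{k,j}$ themselves need not be $\digamma_n$-measurable), one estimates via the weak-type $(p,p)$ bound for $f^*$:
\begin{equation*}
\sum_{k,j} |\mu_{k,j}|^p \sim \sum_k 2^{kp}\, \mu\!\left( \{\nu_k < \infty\}\right) \leq \sum_k 2^{kp}\, \mu\!\left( \{f^* > 2^k\}\right) \sim \|f^*\|_p^p = \|f\|_{H_p}^p.
\end{equation*}

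The main obstacle is the bookkeeping needed to verify that the localized pieces $a_{k,j}$ are genuine $p$-atoms in the sense of Section~2 — especially the mean-zero condition, which rests on the identity that $\phi_k^{(n)}$ has vanishing $\digamma_{\nu_k}$-conditional expectation on each $I_{k,j}$ — together with the a.e.\ convergence of $\sum_k \phi_k^{(n)}$ for a signed martingale of arbitrary sign. The sufficiency half is more standard but still requires the delicate uniform bound on $\sup_n |S_{M_n} a|$ for $p<1$, which exploits the explicit block structure of the Vilenkin Dirichlet kernels $D_{M_n}$ and the cancellation provided by the atomic mean-zero condition.
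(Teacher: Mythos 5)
The paper offers no proof of this lemma --- it is quoted from Weisz \cite{We3} --- so your argument has to stand on its own. The sufficiency half does: since $S_{M_n}$ is exactly the conditional expectation with respect to $\digamma_n$, a $p$-atom $a$ supported on $I=I_N(y)$ satisfies $S_{M_n}a=0$ for $n\le N$ (mean zero) and $|S_{M_n}a|\le\mu(I)^{-1/p}\mathbf{1}_I$ for $n>N$ (support), so $\|\sup_n|S_{M_n}a|\|_p^p\le1$ with no delicacy at all, and $p$-subadditivity finishes that direction. (Also, your final display is just the layer-cake identity for the distribution function of $f^{*}$; no weak-type maximal inequality is needed there.)

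The necessity half has a genuine gap at its central quantitative claim. With $\nu_k:=\inf\{n:|f^{(n)}|>2^k\}$ it is \emph{not} true that $\|f^{(n\wedge\nu_k)}\|_\infty\le c\,2^k$, hence not true that $\|\phi_k^{(n)}\|_\infty\le c\,2^k$: a martingale may overshoot the level $2^k$ by an arbitrary factor at the first crossing time. Concretely, for $m\equiv2$ take $f^{(0)}=0$ and $f^{(n)}=M r_0=M\psi_1$ for all $n\ge1$, so that $\|f\|_{H_p}=M$. Then $\nu_k\equiv1$ whenever $2^k<M$, so $|f^{(n\wedge\nu_k)}|=M$ is not $O(2^k)$; moreover every $\phi_k$ vanishes identically, so your telescoping sum returns $0$ instead of $f^{(n)}$ (the limit $\lim_{k\to-\infty}f^{(n\wedge\nu_k)}$ equals $f^{(n\wedge\nu_{-\infty})}$ with $\nu_{-\infty}=\inf\{n:|f^{(n)}|>0\}$, which need not vanish). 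The standard repair --- and the route actually taken in \cite{We1}, \cite{We3} --- is to stop along a \emph{predictable} majorant: one first proves the atomic decomposition for the space $P_p$ of martingales admitting a nondecreasing adapted sequence $(\lambda_n)$ with $|f^{(n)}|\le\lambda_{n-1}$ and $\lambda_\infty\in L_p$, using $\nu_k:=\inf\{n:\lambda_n>2^k\}$, for which $|f^{(n\wedge\nu_k)}|\le\lambda_{\nu_k-1}\le2^k$ genuinely holds and your localization to the intervals $I_{k,j}$ and the counting estimate then go through verbatim; one then invokes the equivalence $H_p\sim P_p$, a nontrivial Burkholder--Davis--Gundy-type theorem valid for regular filtrations. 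That equivalence is precisely where the standing hypothesis $\sup_n m_n<\infty$ enters; your sketch never uses boundedness of the Vilenkin group, which is itself a warning sign that an essential ingredient is missing.
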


\begin{lemma}[see e.g. \protect\cite{We3}]
\label{lemma2.2} Suppose that an operator $T$ is $\sigma $-sublinear and for
some $0<p\leq 1$%
\begin{equation*}
\int\limits_{\overset{-}{I}}\left\vert Ta\right\vert ^{p}d\mu \leq
c_{p}<\infty ,
\end{equation*}%
for every $p$-atom $a$, where $I$ denotes the support of the atom. If $T$ is
bounded from $L_{\infty \text{ }}$ to $L_{\infty },$ then 
\begin{equation*}
\left\Vert Tf\right\Vert _{p}\leq c_{p}\left\Vert f\right\Vert _{H_{p}},%
\text{ }0<p\leq 1.
\end{equation*}
\end{lemma}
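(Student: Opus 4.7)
The plan is to carry out a now-standard Weisz-type atomic-decomposition argument, using Lemma \ref{lemma2.1} together with the two hypotheses imposed on $T$. Given $f\in H_{p}$, I first invoke Lemma \ref{lemma2.1} to write $f=\sum_{k}\mu_{k}a_{k}$ with $p$-atoms $a_{k}$ supported on intervals $I_{k}$ and $\sum_{k}|\mu_{k}|^{p}\leq 2\|f\|_{H_{p}}^{p}$. Since $T$ is $\sigma$-sublinear, this yields pointwise $|Tf|\leq \sum_{k}|\mu_{k}|\,|Ta_{k}|$, and the elementary subadditivity $(x+y)^{p}\leq x^{p}+y^{p}$ valid for $0<p\leq 1$ upgrades this to
\begin{equation*}
|Tf|^{p}\leq \sum_{k}|\mu_{k}|^{p}|Ta_{k}|^{p}.
\end{equation*}

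It therefore suffices to establish a uniform estimate $\int_{G_{m}}|Ta|^{p}d\mu \leq c_{p}$ valid for every $p$-atom $a$ with support $I$. I would split the integral into the two regions $\bar{I}$ and $I$. The contribution from $\bar{I}$ is precisely the quantity already controlled by the first hypothesis of the lemma. For the piece on $I$, I would use the $L_{\infty}\to L_{\infty}$ boundedness of $T$ combined with the size estimate $\|a\|_{\infty}\leq \mu(I)^{-1/p}$ built into the definition of a $p$-atom, which produces $\|Ta\|_{\infty}\leq C\mu(I)^{-1/p}$ and hence
\begin{equation*}
\int_{I}|Ta|^{p}d\mu \leq C^{p}\mu(I)^{-1}\mu(I)=C^{p}.
\end{equation*}
Adding the two contributions gives the required uniform atom bound.

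Finally, integrating the $\sigma$-sublinear pointwise inequality over $G_{m}$ and inserting the uniform atom estimate yields $\|Tf\|_{p}^{p}\leq c_{p}\sum_{k}|\mu_{k}|^{p}$; taking the infimum over all admissible atomic decompositions and applying the quasi-norm equivalence of Lemma \ref{lemma2.1} closes the argument. The only subtle point, and where I expect the main technical obstacle to lie, is justifying that one may legitimately distribute $T$ over the infinite series $\sum_{k}\mu_{k}a_{k}$ in a genuinely $\sigma$-sublinear manner rather than just over finite partial sums. I would handle this by first applying the above estimate to the truncations $\sum_{k=0}^{N}\mu_{k}a_{k}$, which are finite linear combinations of bounded functions and hence lie in every $L_{q}$, and then passing to the limit through Fatou's lemma, the $\sigma$-sublinearity providing the necessary domination of the tail.
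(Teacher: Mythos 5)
The paper does not prove this lemma at all; it is quoted as a known result with a citation to Weisz's work. Your argument is precisely the standard proof from that source: atomic decomposition via Lemma \ref{lemma2.1}, the pointwise $p$-subadditivity $\bigl(\sum x_k\bigr)^{p}\leq\sum x_k^{p}$ for $0<p\leq 1$, the splitting of $\int|Ta|^{p}d\mu$ into the contribution over $\overline{I}$ (controlled by hypothesis) and over $I$ (controlled by $L_{\infty}$-boundedness and $\|a\|_{\infty}\leq\mu(I)^{-1/p}$), and a Fatou-type passage to the limit justified by $\sigma$-sublinearity. The argument is correct and complete, so there is nothing to add.
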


\begin{lemma}[see \protect\cite{gat}]
\label{lemma2} Let $n>t,$ $t,n\in \mathbb{N}.$ Then%
\begin{equation*}
K_{M_{n}}\left( x\right) =\left\{ 
\begin{array}{c}
\frac{M_{t}}{1-r_{t}\left( x\right) },\text{ \ }x\in I_{t}\backslash I_{t+1},%
\text{ }x-x_{t}e_{t}\in I_{n}, \\ 
\frac{M_{n}-1}{2},\text{ \ \ \ \ \ \ \ \ \ \ \ \ \ \ \ \ \ \ \ \ \ \ \ \ \ \
\ \ \ }x\in I_{n}, \\ 
0,\text{ \ \ \ \ \ \ \ \ \ \ \ \ \ \ \ \ \ \ \ \ \ \ \ \ \ \ \ \ \ \
otherwise.}%
\end{array}%
\right.
\end{equation*}
\end{lemma}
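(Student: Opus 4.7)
The plan is to prove the identity by a direct, coordinate-wise evaluation of the Fej\'er sum defining $K_{M_n}$. First I would interchange the order of summation to write
\begin{equation*}
M_n K_{M_n}(x) \;=\; \sum_{k=1}^{M_n}\sum_{j=0}^{k-1}\psi_j(x) \;=\; M_n D_{M_n}(x)\;-\;\sum_{j=0}^{M_n-1} j\,\psi_j(x).
\end{equation*}
The first term on the right is handled by the standard identity $D_{M_n}(x) = M_n\mathbf{1}_{I_n}(x)$, which follows from orthogonality of the characters of the finite quotient $G_m/I_n$. The problem is thus reduced to evaluating the weighted character sum $S(x) := \sum_{j=0}^{M_n-1} j\,\psi_j(x)$.

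Next I would expand each $j$ in its generalized Vilenkin digits $j = \sum_{l=0}^{n-1} j_l M_l$ with $j_l \in Z_{m_l}$, so that $\psi_j(x)=\prod_l r_l(x)^{j_l}$, and pull out the factor $j_l$ to factorize
\begin{equation*}
S(x) \;=\; \sum_{l=0}^{n-1} M_l \left(\sum_{j_l=0}^{m_l-1} j_l\, r_l(x)^{j_l}\right)\prod_{\substack{0\le k<n\\ k\neq l}}\left(\sum_{j_k=0}^{m_k-1} r_k(x)^{j_k}\right).
\end{equation*}
Two one-variable sums then remain to be evaluated. The plain geometric sum $\sum_{j_k=0}^{m_k-1} r_k(x)^{j_k}$ equals $m_k$ when $x_k=0$ and vanishes otherwise, since $r_k(x)$ is a nontrivial $m_k$-th root of unity. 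The weighted sum $\sum_{j_l=0}^{m_l-1} j_l r_l(x)^{j_l}$ equals $m_l(m_l-1)/2$ when $x_l=0$ and, via the standard one-line manipulation $(1-z)\sum_j j z^j = -m$ for $z^m=1,\, z\neq 1$, equals $-m_l/(1-r_l(x))$ when $x_l\neq 0$.

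The formula then follows by a case analysis on how many of the coordinates $x_0,\ldots,x_{n-1}$ are nonzero. If all of them vanish, so that $x\in I_n$, the inner sums are all arithmetic and the computation telescopes via $\sum_l M_l(m_l-1)=M_n-M_0$, combining with $M_n D_{M_n}(x)=M_n^2$ to produce the stated constant value. If exactly one coordinate $x_t$ with $t<n$ is nonzero, only the term $l=t$ can survive in $S(x)$, and the product factor $\prod_{k\neq t,\, k<n}m_k$ is nonzero only when every other $x_k$ with $k<n$ also vanishes, i.e.\ precisely when $x-x_te_t\in I_n$; dividing through by $M_n$ then yields the factor $M_t/(1-r_t(x))$. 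In every other configuration at least two of the inner geometric sums vanish, so $S(x)=0$ and $K_{M_n}(x)=0$. The only genuinely subtle point is recognizing that the side condition $x-x_te_t\in I_n$ in the first branch of the formula is \emph{forced} by the vanishing of a geometric sum rather than being an extra hypothesis; once that is observed the argument is a routine calculation and I do not anticipate any real obstacle.
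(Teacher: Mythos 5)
Your proof is essentially correct, but note that the paper offers no argument for this lemma at all --- it is simply quoted from G\'at's paper --- so there is nothing internal to compare against; a self-contained computation like yours is a perfectly reasonable way to supply the missing proof. The skeleton is sound: the identity $M_nK_{M_n}=M_nD_{M_n}-\sum_{j<M_n}j\psi_j$, the evaluation $D_{M_n}=M_n\mathbf{1}_{I_n}$, the digit-wise factorization of $S(x)=\sum_j j\psi_j(x)$, and the two one-variable sums (geometric, and weighted geometric via $(1-z)\sum_j jz^j=-m$) are all correct, and the case analysis on the number of nonzero coordinates $x_0,\dots,x_{n-1}$ correctly produces the branch $M_t/(1-r_t(x))$ when exactly one coordinate is nonzero and $0$ when at least two are.

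The one place where you are not entitled to say the computation ``produce[s] the stated constant value'' is the branch $x\in I_n$. With the paper's own convention $K_n=\frac{1}{n}\sum_{k=1}^{n}D_k$, your telescoping gives $S(x)=\frac{M_n(M_n-1)}{2}$ on $I_n$, hence
\begin{equation*}
M_nK_{M_n}(x)=M_n^2-\frac{M_n(M_n-1)}{2}=\frac{M_n(M_n+1)}{2},\qquad K_{M_n}(x)=\frac{M_n+1}{2},
\end{equation*}
not $\frac{M_n-1}{2}$ as the lemma asserts; the sanity check $K_{M_n}(0)=\frac{1}{M_n}\sum_{k=1}^{M_n}k=\frac{M_n+1}{2}$ confirms this. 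The discrepancy is an off-by-one inherited from a different kernel convention ($\frac{1}{n}\sum_{k=0}^{n-1}D_k$) in the cited source, and it is harmless for everything the paper uses downstream (only the bound $|K_{M_n}|\le cM_n$ on $I_n$ matters), but you should flag it explicitly rather than claim agreement. Everything else in your argument, including the observation that the side condition $x-x_te_t\in I_n$ is forced by the vanishing of the geometric factors, is correct.
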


For the proof of our main results we also need the following new Lemmas of
independent interest:

\begin{lemma}
\label{lemma0nn}Let $\{q_{k}:k\geq 0\}$ be a sequence of non-decreasing
numbers, satisfying condition (\ref{fn01}). \textit{Then} 
\begin{equation*}
\left\vert F_{n}\right\vert \leq \frac{c}{n}\left\{ \sum_{j=0}^{\left\vert
n\right\vert }M_{j}\left\vert K_{M_{j}}\right\vert \right\} ,
\end{equation*}%
for some positive constant $c.$
\end{lemma}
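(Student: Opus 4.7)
The plan is to express the Nörlund kernel $F_n = \frac{1}{Q_n}\sum_{k=1}^{n} q_{n-k} D_k$ in terms of Fejér kernels $K_j$ via Abel summation, and then apply the known estimate (\ref{fn5}) together with the regularity assumption (\ref{fn01}). Since the definition of the Fejér kernel gives $kK_k = \sum_{j=1}^{k} D_j$ (with the convention $K_0 := 0$), we have $D_k = kK_k - (k-1)K_{k-1}$, and substituting this into the formula for $F_n$ produces a telescoping rearrangement of the form
\begin{equation*}
F_n = \frac{1}{Q_n}\left( q_0\, n K_n + \sum_{k=1}^{n-1} \bigl(q_{n-k} - q_{n-k-1}\bigr)\, k K_k\right).
\end{equation*}

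The key observation is that since $\{q_k\}$ is non-decreasing, each coefficient $q_{n-k} - q_{n-k-1}$ is non-negative, so the triangle inequality is applied without loss. Combined with the elementary bound $q_0 \geq 0$, this yields
\begin{equation*}
|F_n| \leq \frac{1}{Q_n}\left( q_0\, n |K_n| + \sum_{k=1}^{n-1} \bigl(q_{n-k} - q_{n-k-1}\bigr)\, k|K_k|\right).
\end{equation*}
At this point I would invoke inequality (\ref{fn5}) to estimate $k|K_k| \leq c\sum_{l=0}^{|k|} M_l |K_{M_l}| \leq c\sum_{l=0}^{|n|} M_l |K_{M_l}|$ uniformly for all $1\leq k\leq n$, and pull this bound out of the sum. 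The remaining scalar factor telescopes: $q_0 + \sum_{k=1}^{n-1}(q_{n-k}-q_{n-k-1}) = q_{n-1}$.

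Combining these steps gives $|F_n| \leq c\,(q_{n-1}/Q_n)\sum_{l=0}^{|n|} M_l |K_{M_l}|$, and a final application of the hypothesis (\ref{fn01}), which provides $q_{n-1}/Q_n \leq c/n$, delivers the claimed estimate. The main (and essentially only) non-trivial step is the Abel summation idea and realizing that monotonicity of $\{q_k\}$ makes the resulting differences non-negative so that the triangle inequality can be applied cleanly before the telescoping collapse; once this is done, the rest is a straightforward combination of (\ref{fn5}) and (\ref{fn01}).
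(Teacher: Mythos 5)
Your proposal is correct and follows essentially the same route as the paper: the same Abel summation producing $F_n=\frac{1}{Q_n}\bigl(\sum_{k=1}^{n-1}(q_{n-k}-q_{n-k-1})kK_k+q_0nK_n\bigr)$, the same use of monotonicity so that the coefficient sum telescopes to $q_{n-1}$, and the same combination of (\ref{fn5}) with the hypothesis (\ref{fn01}). No gaps.
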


\begin{proof}
By using Abel transformation we obtain that%
\begin{equation}
Q_{n}:=\overset{n-1}{\underset{j=0}{\sum }}q_{j}=\overset{n}{\underset{j=1}{%
\sum }}q_{n-j}\cdot 1=\overset{n-1}{\underset{j=1}{\sum }}\left(
q_{n-j}-q_{n-j-1}\right) j+q_{0}n  \label{2b}
\end{equation}%
and%
\begin{equation}
F_{n}=\frac{1}{Q_{n}}\left( \overset{n-1}{\underset{j=1}{\sum }}\left(
q_{n-j}-q_{n-j-1}\right) jK_{j}+q_{0}nK_{n}\right) .  \label{2bb}
\end{equation}

Since $\{q_{k}:k\geq 0\}$ be a non-decreasing sequence, satisfying condition
(\ref{fn01}) we obtain that 
\begin{equation}
\frac{1}{Q_{n}}\left( \overset{n-1}{\underset{j=1}{\sum }}\left\vert
q_{n-j}-q_{n-j-1}\right\vert +q_{0}\right) \leq \frac{1}{Q_{n}}\left( 
\overset{n-1}{\underset{j=1}{\sum }}\left( q_{n-j}-q_{n-j-1}\right)
+q_{0}\right)  \label{2cc}
\end{equation}%
\begin{equation*}
=\frac{q_{n-1}}{Q_{n}}\leq \frac{c}{n}.
\end{equation*}

By combining (\ref{fn5}) with equalities (\ref{2bb}) and (\ref{2cc}) we
immediately get that%
\begin{equation*}
\left\vert F_{n}\right\vert \leq \left( \frac{1}{Q_{n}}\left( \overset{n-1}{%
\underset{j=1}{\sum }}\left\vert q_{n-j}-q_{n-j-1}\right\vert +q_{0}\right)
\right) \sum_{i=0}^{\left\vert n\right\vert }M_{i}\left\vert
K_{M_{i}}\right\vert \leq \frac{c}{n}\sum_{i=0}^{\left\vert n\right\vert
}M_{i}\left\vert K_{M_{i}}\right\vert .
\end{equation*}

The proof is complete by combining the estimates above.
\end{proof}

\begin{lemma}
\label{lemma00nn}Let $n\geq M_{N}$ and $\{q_{k}:k\geq 0\}$ be a sequence of
non-decreasing numbers. \textit{Then} 
\begin{equation*}
\left\vert \frac{1}{Q_{n}}\overset{n}{\underset{j=M_{N}}{\sum }}%
q_{n-j}D_{j}\right\vert \leq \frac{c}{M_{N}}\left\{ \sum_{j=0}^{\left\vert
n\right\vert }M_{j}\left\vert K_{M_{j}}\right\vert \right\} ,
\end{equation*}%
for some positive constant $c.$
\end{lemma}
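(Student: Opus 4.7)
The plan is to apply Abel summation to the partial sum $\sum_{j=M_N}^{n}q_{n-j}D_{j}$ using the identity $\sum_{i=1}^{j}D_{i}=jK_{j}$, which reformulates everything in terms of Fej\'er kernels, to which the estimate (\ref{fn5}) applies. Writing $B_{j}:=jK_{j}$ and $a_{j}:=q_{n-j}$, the Abel transformation produces
\begin{equation*}
\sum_{j=M_{N}}^{n}q_{n-j}D_{j}=q_{0}\,nK_{n}-q_{n-M_{N}}(M_{N}-1)K_{M_{N}-1}+\sum_{j=M_{N}}^{n-1}(q_{n-j}-q_{n-j-1})\,jK_{j}.
\end{equation*}

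Next I would take absolute values. Since $\{q_{k}\}$ is non-decreasing, every difference $q_{n-j}-q_{n-j-1}$ is non-negative, and by (\ref{fn5}) each factor $j|K_{j}|$ with $j\leq n$ is dominated by $c\sum_{l=0}^{\left\vert n\right\vert }M_{l}|K_{M_{l}}|$. Pulling this common factor out and collecting the remaining coefficients gives
\begin{equation*}
\left\vert \frac{1}{Q_{n}}\sum_{j=M_{N}}^{n}q_{n-j}D_{j}\right\vert \leq \frac{c}{Q_{n}}\left( q_{0}+q_{n-M_{N}}+\sum_{j=M_{N}}^{n-1}(q_{n-j}-q_{n-j-1})\right) \sum_{l=0}^{\left\vert n\right\vert }M_{l}|K_{M_{l}}|.
\end{equation*}
The inner sum telescopes to $q_{n-M_{N}}-q_{0}$, so the coefficient simplifies to $2c\,q_{n-M_{N}}/Q_{n}$.

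The decisive step is the lower bound for $Q_{n}$. Using \emph{only} that $\{q_{k}\}$ is non-decreasing, I would observe that
\begin{equation*}
Q_{n}=\sum_{k=0}^{n-1}q_{k}\geq \sum_{k=n-M_{N}}^{n-1}q_{k}\geq M_{N}\,q_{n-M_{N}},
\end{equation*}
since each $q_{k}$ with $k\geq n-M_{N}$ dominates $q_{n-M_{N}}$. Hence $q_{n-M_{N}}/Q_{n}\leq 1/M_{N}$, which yields the asserted inequality.

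The main obstacle---or rather the key point to spot---is precisely this last estimate: it is what replaces the hypothesis (\ref{fn01}) used in Lemma \ref{lemma0nn}. Because the summation here begins at $M_{N}$ rather than at $1$, monotonicity alone is enough to pack $M_{N}$ copies of $q_{n-M_{N}}$ into $Q_{n}$, producing the factor $1/M_{N}$ without any extra growth condition on the coefficients. Everything else is standard Abel manipulation plus the known pointwise estimate (\ref{fn5}) for $n|K_{n}|$.
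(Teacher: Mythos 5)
Your proof is correct and follows essentially the same route as the paper's: Abel summation turning the Dirichlet kernels into Fej\'er kernels, the pointwise estimate (\ref{fn5}), and the monotonicity-based lower bound $Q_{n}\geq M_{N}q_{n-M_{N}}$ to produce the factor $1/M_{N}$. The only cosmetic difference is in bookkeeping: the paper extracts $1/M_{N}$ from $\left\vert K_{j}\right\vert \leq j^{-1}\sum_{l}M_{l}\left\vert K_{M_{l}}\right\vert \leq M_{N}^{-1}\sum_{l}M_{l}\left\vert K_{M_{l}}\right\vert$ for $j\geq M_{N}$ together with the identity $\sum_{j}(q_{n-j}-q_{n-j-1})j+q_{0}n=Q_{n}$, whereas you pull out $j\left\vert K_{j}\right\vert$ wholesale, telescope the remaining coefficients to $2q_{n-M_{N}}$, and then divide by $Q_{n}\geq M_{N}q_{n-M_{N}}$ --- the same ingredients arranged slightly differently.
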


\begin{proof}
Let $M_{N}\leq j\leq n.$ By using (\ref{fn5}) we get that%
\begin{equation*}
\left\vert K_{j}\right\vert \leq \frac{1}{j}\sum_{l=0}^{\left\vert
j\right\vert }M_{l}\left\vert K_{M_{l}}\right\vert \leq \frac{1}{M_{N}}%
\sum_{l=0}^{\left\vert n\right\vert }M_{l}\left\vert K_{M_{l}}\right\vert .
\end{equation*}

Let the sequence $\{q_{k}:k\geq 0\}$ be non-decreasing. Then 
\begin{equation*}
M_{N}q_{n-M_{N}-1}\leq q_{n-M_{N}-1}+q_{n-M_{N}}+...+q_{n-1}\leq Q_{n}.
\end{equation*}

If we apply (\ref{2b}) we obtain that 
\begin{equation*}
\overset{n-1}{\underset{j=M_{N}}{\sum }}\left\vert
q_{n-j}-q_{n-j-1}\right\vert j+q_{0}n\leq \overset{n-1}{\underset{j=0}{\sum }%
}\left\vert q_{n-j}-q_{n-j-1}\right\vert j+q_{0}n
\end{equation*}%
\begin{equation*}
=\overset{n-1}{\underset{j=1}{\sum }}\left( q_{n-j}-q_{n-j-1}\right)
j+q_{0}n=Q_{n}.
\end{equation*}

By using Abel transformation we find that%
\begin{equation*}
\left\vert \frac{1}{Q_{n}}\overset{n}{\underset{j=M_{N}}{\sum }}%
q_{n-j}D_{j}\right\vert =\left\vert \frac{1}{Q_{n}}\left( \overset{n-1}{%
\underset{j=M_{N}}{\sum }}\left( q_{n-j}-q_{n-j-1}\right)
jK_{j}+q_{0}nK_{n}-M_{N}q_{n-M_{N}-1}\right) \right\vert
\end{equation*}%
\begin{equation*}
\left( \frac{1}{Q_{n}}\left( \overset{n-1}{\underset{j=M_{N}}{\sum }}%
\left\vert q_{n-j}-q_{n-j-1}\right\vert j+q_{0}n+M_{N}q_{n-M_{N}-1}\right)
\right) \frac{1}{M_{N}}\sum_{i=0}^{\left\vert n\right\vert }M_{i}\left\vert
K_{M_{i}}\right\vert \leq \frac{1}{M_{N}}\sum_{i=0}^{\left\vert n\right\vert
}M_{i}\left\vert K_{M_{i}}\right\vert .
\end{equation*}%
The proof is complete.
\end{proof}

\begin{lemma}
\label{lemma5}\ Let $\{q_{k}:k\geq 0\}$ be a sequence of non-decreasing
numbers, satisfying condition (\ref{fn01}). Let $x\in I_{N}^{k,l},$ $%
k=0,\dots ,N-2,$ $l=k+1,\dots ,N-1.$ Then 
\begin{equation*}
\int_{I_{N}}\left\vert F_{n}\left( x-t\right) \right\vert d\mu \left(
t\right) \leq \frac{cM_{l}M_{k}}{nM_{N}}.
\end{equation*}

Let $x\in I_{N}^{k,N},$ $k=0,\dots ,N-1.$ Then%
\begin{equation*}
\int_{I_{N}}\left\vert F_{n}\left( x-t\right) \right\vert d\mu \left(
t\right) \leq \frac{cM_{k}}{M_{N}}.
\end{equation*}%
Here $c$ is a positive constant.
\end{lemma}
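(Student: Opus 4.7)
The plan is to use the pointwise bound of Lemma~\ref{lemma0nn},
\[
|F_n(y)|\le \frac{c}{n}\sum_{j=0}^{|n|}M_j|K_{M_j}(y)|,
\]
substitute $y=x-t$, integrate over $t\in I_N$, and evaluate $|K_{M_j}(x-t)|$ term by term via Lemma~\ref{lemma2}. The key structural observation is that for every $t\in I_N$ we have $(x-t)_i=x_i$ for $i<N$, so the first nonzero coordinate of $x-t$ is $k$ and $x-t\in I_k\setminus I_{k+1}$; moreover $|1-r_k(x-t)|\ge c>0$ uniformly in $t$, because $x_k\ne 0$ and $m_k$ is bounded. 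By Lemma~\ref{lemma2} this gives $|K_{M_j}(x-t)|\le cM_j$ for $j\le k$, while for $j>k$ one has $|K_{M_j}(x-t)|\le cM_k$ whenever $(x-t)-x_ke_k\in I_j$, and $K_{M_j}(x-t)=0$ otherwise.

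For the first case $x\in I_N^{k,l}$ with $k<l<N$, the coordinate $x_l\ne 0$ prevents $(x-t)-x_ke_k$ from lying in $I_j$ for any $j>l$, so the $j$-sum truncates at $l$. This produces the uniform pointwise estimate
\[
|F_n(x-t)|\le \frac{c}{n}\Bigl(\sum_{j=0}^{k}M_j^2+M_k\sum_{j=k+1}^{l}M_j\Bigr)\le \frac{cM_kM_l}{n},
\]
valid for every $t\in I_N$, and multiplying by $\mu(I_N)=1/M_N$ yields the first claim.

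For the second case $x\in I_N^{k,N}$ the coordinates $x_{k+1},\dots,x_{N-1}$ all vanish, so $(x-t)-x_ke_k\in I_j$ is automatic for every $k<j\le N$, and the uniform bound $|K_{M_j}(x-t)|\le cM_k$ persists there. For $j>N$, however, the same condition reduces to $t_i=x_i$ for $N\le i<j$, which restricts $t$ to a subset of $I_N$ of measure $1/M_j$, while on that subset $|K_{M_j}(x-t)|\le cM_k$ still holds. Integrating term by term in $t$ and then multiplying by $M_j$, the ranges $j\le k$, $k<j\le N$ and $N<j\le|n|$ contribute at most $cM_k^2/M_N$, $cM_k$ and $cM_k(|n|-N)$ respectively. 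Dividing by $n$, the bound $cM_k/M_N$ is obtained by combining $n\ge M_{|n|}$ with the elementary inequality $(|n|-N+1)M_N\le cM_{|n|}$, which is immediate from $M_{|n|}/M_N\ge 2^{|n|-N}$ in any bounded Vilenkin system.

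The main difficulty will be the range $j>N$ in the second case: the pointwise bound $|K_{M_j}|\le cM_k$ is no longer uniform in $t\in I_N$, so one has to simultaneously keep track of the small support (of measure $1/M_j$) and of the pointwise value on it, and then verify that the resulting sum of length $|n|-N$, after division by $n$, is still absorbed by $cM_k/M_N$. Everything else is routine bookkeeping with Lemma~\ref{lemma0nn} and the structure of $K_{M_j}$ from Lemma~\ref{lemma2}.
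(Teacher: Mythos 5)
Your proposal is correct and follows essentially the same route as the paper: apply the pointwise bound of Lemma \ref{lemma0nn}, then estimate each $\left\vert K_{M_j}(x-t)\right\vert$ via Lemma \ref{lemma2} according to the positions $k$ and $l$ of the first two nonzero coordinates of $x-t$, and sum geometrically. The only difference is that in the second case your support-tracking for $j>N$ is unnecessary: since $K_{M_j}(x-t)$ is either zero or equal to $M_k/(1-r_k(x-t))$, the uniform bound $\left\vert K_{M_j}(x-t)\right\vert \leq cM_k$ holds for all $j$, and $\sum_{j=0}^{\left\vert n\right\vert}M_j\leq cM_{\left\vert n\right\vert}\leq cn$ already absorbs the factor $1/n$, which is how the paper concludes.
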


\begin{proof}
Let $x\in I_{N}^{k,l}$. Then, by applying Lemma \ref{lemma2}, we have that 
\begin{equation}
K_{M_{n}}\left( x\right) =0,\,\,\text{when \thinspace \thinspace }n>l.
\label{777a}
\end{equation}

Let $k<n\leq l$. Then we get that%
\begin{equation}
\left\vert K_{M_{n}}\left( x\right) \right\vert \leq cM_{k}.  \label{777777}
\end{equation}

Let $x\in I_{N}^{k,l},$ for $0\leq k<l\leq N-1$ and $t\in I_{N}.$ Since $%
x-t\in $ $I_{N}^{k,l}$ and $n\geq M_{N},$ by combining Lemma \ref{lemma0nn}
with (\ref{777a}) and (\ref{777777}), we obtain that%
\begin{equation}
\int_{I_{N}}\left\vert F_{n}\left( x-t\right) \right\vert d\mu \left(
t\right) \leq \frac{c}{n}\underset{i=0}{\overset{\left\vert n\right\vert }{%
\sum }}M_{i}\int_{I_{N}}\left\vert K_{M_{i}}\left( x-t\right) \right\vert
d\mu \left( t\right)  \label{888}
\end{equation}%
\begin{equation*}
\leq \frac{c}{n}\int_{I_{N}}M_{i}\overset{l}{\underset{i=0}{\sum }}M_{k}d\mu
\left( t\right) \leq \frac{cM_{k}M_{l}}{nM_{N}}
\end{equation*}%
and the first astimate is proved.

Now, let $x\in I_{N}^{k,N}$. Since $x-t\in I_{N}^{k,N}$ for $t\in I_{N},$ by
applying Lemma \ref{lemma2}, we obtain that 
\begin{equation*}
\left\vert K_{M_{i}}\left( x-t\right) \right\vert \leq cM_{k},\text{ }\left(
k\in 
\mathbb{N}
\right) .
\end{equation*}%
Hence, according to Lemma \ref{lemma0nn}, we have that%
\begin{equation}
\int_{I_{N}}\left\vert F_{n}\left( x-t\right) \right\vert d\mu \left(
t\right) \leq \frac{c}{n}\underset{i=0}{\overset{\left\vert n\right\vert }{%
\sum }}M_{i}\int_{I_{N}}\left\vert K_{M_{i}}\left( x-t\right) \right\vert
d\mu \left( t\right)  \label{11110}
\end{equation}%
\begin{equation*}
\leq \frac{c}{n}\overset{\left\vert n\right\vert -1}{\underset{i=0}{\sum }}%
M_{i}\int_{I_{N}}M_{k}d\mu \left( t\right) \leq \frac{cM_{k}}{M_{N}}.
\end{equation*}

By combining (\ref{888}) and (\ref{11110}) we complete the proof of \ Lemma %
\ref{lemma5}.
\end{proof}

Analogously we can prove the similar estimation, but now without any
restriction like (\ref{fn01}).

\begin{lemma}
\label{lemma5aaa}\ Let $x\in I_{N}^{k,l},$ $k=0,\dots ,N-1,$ $l=k+1,\dots ,N$
and $\{q_{k}:k\geq 0\}$ be a sequence of non-decreasing sequence. Then%
\begin{equation*}
\int_{I_{N}}\left\vert \frac{1}{Q_{n}}\overset{n}{\underset{j=M_{N}}{\sum }}%
q_{n-j}D_{j}\left( x-t\right) \right\vert d\mu \left( t\right) \leq \frac{%
cM_{l}M_{k}}{M_{N}^{2}},
\end{equation*}%
for some positive constant $c.$
\end{lemma}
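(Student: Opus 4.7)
The plan is to run the proof of Lemma \ref{lemma5} essentially verbatim, substituting Lemma \ref{lemma00nn} for Lemma \ref{lemma0nn}. The two lemmas have the same right-hand side $\sum_{i=0}^{|n|}M_i|K_{M_i}|$ modulo the leading prefactor: $c/n$ in Lemma \ref{lemma0nn} becomes $c/M_N$ in Lemma \ref{lemma00nn}. Crucially, Lemma \ref{lemma00nn} holds for any non-decreasing sequence $\{q_k\}$ without appealing to the regularity condition (\ref{fn01}), which is precisely why Lemma \ref{lemma5aaa} requires no such restriction.

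Concretely, I would first apply Lemma \ref{lemma00nn} pointwise inside the integral, obtaining
$$\int_{I_N}\Bigl|\frac{1}{Q_n}\sum_{j=M_N}^{n}q_{n-j}D_j(x-t)\Bigr|\,d\mu(t)\leq \frac{c}{M_N}\sum_{i=0}^{|n|}M_i\int_{I_N}|K_{M_i}(x-t)|\,d\mu(t).$$
Since $x\in I_N^{k,l}$ and $t\in I_N$ imply $x-t\in I_N^{k,l}$, I would then invoke Lemma \ref{lemma2} exactly as in the proof of Lemma \ref{lemma5}: in the case $k<l<N$ one obtains $K_{M_i}(x-t)=0$ for $i>l$ and $|K_{M_i}(x-t)|\leq cM_k$ for $i\leq l$, while in the case $l=N$ one has $|K_{M_i}(x-t)|\leq cM_k$ on the relevant support.

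Substituting these kernel estimates, summing $M_i$ times them over the surviving range of $i$, and integrating over $I_N$ (which contributes a factor $1/M_N$) yields $(c/M_N)\cdot cM_kM_l/M_N=cM_kM_l/M_N^2$, which is the asserted bound. The main obstacle is the careful case-by-case bookkeeping of the kernel estimates coming from Lemma \ref{lemma2}, particularly for $l=N$ where the vanishing structure of $K_{M_i}(x-t)$ is more delicate than in the case $l<N$; the geometric growth of the numbers $\{M_j\}$ absorbs all lower-order terms in the sums $\sum_iM_i$, so no further input is needed beyond what is already used in the proof of Lemma \ref{lemma5}.
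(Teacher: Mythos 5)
Your plan works in the case $k<l<N$: there Lemma \ref{lemma2} gives $K_{M_i}(x-t)=0$ for $i>l$ and $|K_{M_i}(x-t)|\le cM_k$ for $i\le l$, so the surviving sum $\sum_{i=0}^{l}M_i\cdot cM_k/M_N$ really is geometric and yields $cM_lM_k/M_N$, which together with the prefactor $c/M_N$ from Lemma \ref{lemma00nn} gives the claimed $cM_lM_k/M_N^2$. The genuine gap is the case $l=N$. For $x\in I_N^{k,N}$ the kernels $K_{M_i}(x-t)$ do \emph{not} vanish for $i>N$; Lemma \ref{lemma2} only gives $|K_{M_i}(x-t)|\le cM_k$, so $M_i\int_{I_N}|K_{M_i}(x-t)|\,d\mu(t)\le cM_iM_k/M_N$, and even the sharper remark that, as a function of $t\in I_N$, the kernel $K_{M_i}(x-t)$ is supported on a set of measure $1/M_i$ only improves each term to $M_i\cdot cM_k/M_i=cM_k$. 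In either case the terms of $\sum_{i=0}^{|n|}M_i\int_{I_N}|K_{M_i}(x-t)|\,d\mu(t)$ are \emph{not} geometrically decaying beyond $i=N$: the sum is of order $(|n|-N)M_k$ (or, with the crude pointwise bound, $M_{|n|}M_k/M_N$), and after multiplying by $c/M_N$ you obtain at best $c(|n|-N+1)M_k/M_N$, not the claimed $cM_NM_k/M_N^2=cM_k/M_N$. This is exactly the point where the proof of Lemma \ref{lemma5} leans on the prefactor $c/n$ from Lemma \ref{lemma0nn}, since $\frac{c}{n}\sum_{i=0}^{|n|}M_i\le c$; your prefactor $c/M_N$ cannot play that role because $n/M_N$ is unbounded. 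So the sentence ``the geometric growth of the $M_j$ absorbs all lower-order terms'' is precisely what fails for $l=N$.

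The estimate is nevertheless true for $l=N$, but it needs a separate argument that does not pass through Lemma \ref{lemma00nn}. The simplest route is the classical Dirichlet kernel bound for bounded Vilenkin systems: if $y\in I_k\setminus I_{k+1}$ then $|D_j(y)|\le c\sum_{s=0}^{k}m_sM_s\le cM_{k+1}\le cM_k$ for every $j$, because $D_{M_s}(y)=0$ for $s>k$ in the standard representation of $D_j$. Since $x-t\in I_N^{k,N}\subset I_k\setminus I_{k+1}$ for $t\in I_N$, this gives the pointwise bound $\bigl|\frac{1}{Q_n}\sum_{j=M_N}^{n}q_{n-j}D_j(x-t)\bigr|\le \frac{cM_k}{Q_n}\sum_{j=M_N}^{n}q_{n-j}\le cM_k$, and integrating over $I_N$ produces $cM_k/M_N=cM_NM_k/M_N^2$ directly. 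You should insert this (or an equivalent) argument for $l=N$; the paper's own indication that the proof is ``analogous'' to Lemma \ref{lemma5} hides exactly this case distinction.
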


\section{Proofs of the Theorems}

\begin{proof}[Proof of Theorem \protect\ref{theorem2fejerstrong}]
By Lemma \ref{lemma2.1}, the proof of Theorem \ref{theorem2fejerstrong} will
be complete, if we show that%
\begin{equation}
\frac{1}{\log ^{\left[ 1/2+p\right] }n}\overset{n}{\underset{m=1}{\sum }}%
\frac{\left\Vert t_{m}a\right\Vert _{H_{p}}^{p}}{m^{2-2p}}\leq c_{p},
\label{14c}
\end{equation}%
for every $p$-atom $a,$ with support$\ I$, $\mu \left( I\right) =M_{N}^{-1}.$
We may assume that $I=I_{N}.$ It is easy to see that $S_{n}\left( a\right)
=t_{n}\left( a\right) =0,$ when $n\leq M_{N}$. Therefore, we can suppose
that $n>M_{N}$.

Let $x\in I_{N}.$ Since $t_{n}$ is bounded from $L_{\infty }$ to $L_{\infty
} $ (the boundedness follows from (\ref{fn6})) and $\left\Vert a\right\Vert
_{\infty }\leq M_{N}^{1/p}$ we obtain that 
\begin{equation*}
\int_{I_{N}}\left\vert t_{m}a\right\vert ^{p}d\mu \leq \frac{\left\Vert
a\right\Vert _{\infty }^{p}}{M_{N}}\leq c<\infty ,\text{ \ }0<p\leq 1/2.
\end{equation*}%
Hence, 
\begin{equation}
\frac{1}{\log ^{\left[ 1/2+p\right] }n}\overset{n}{\underset{m=1}{\sum }}%
\frac{\int_{I_{N}}\left\vert t_{m}a\right\vert ^{p}d\mu }{m^{2-2p}}\leq 
\frac{1}{\log ^{\left[ 1/2+p\right] }n}\overset{n}{\underset{k=1}{\sum }}%
\frac{1}{m^{2-2p}}\leq c<\infty .  \label{14b}
\end{equation}

It is easy to see that 
\begin{equation}
\left\vert t_{m}a\left( x\right) \right\vert =\int_{I_{N}}\left\vert a\left(
t\right) F_{n}\left( x-t\right) \right\vert d\mu \left( t\right)
=\int_{I_{N}}\left\vert a\left( t\right) \frac{1}{Q_{n}}\overset{n}{\underset%
{j=M_{N}}{\sum }}q_{n-j}D_{j}\left( x-t\right) \right\vert d\mu \left(
t\right)   \label{14a}
\end{equation}%
\begin{equation*}
\leq \left\Vert a\right\Vert _{\infty }\int_{I_{N}}\left\vert \frac{1}{Q_{n}}%
\overset{n}{\underset{j=M_{N}}{\sum }}q_{n-j}D_{j}\left( x-t\right)
\right\vert d\mu \left( t\right) \leq M_{N}^{1/p}\int_{I_{N}}\left\vert 
\frac{1}{Q_{n}}\overset{n}{\underset{j=M_{N}}{\sum }}q_{n-j}D_{j}\left(
x-t\right) \right\vert d\mu \left( t\right) 
\end{equation*}

Let $t_{n}$ be Nörlund means, with non-decreasing coefficients $%
\{q_{k}:k\geq 0\}$ and $x\in I_{N}^{k,l},\,0\leq k<l\leq N.$ Then, in the
view of Lemma \ref{lemma5aaa} we get that 
\begin{equation}
\left\vert t_{m}a\left( x\right) \right\vert \leq cM_{l}M_{k}M_{N}^{1/p-2},%
\text{ for }0<p\leq 1/2.  \label{12q1}
\end{equation}

First, we consider the case $0<p<1/2.$ \ By using (\ref{1.1}), (\ref{14a}), (%
\ref{12q1}) we find that%
\begin{equation}
\int_{\overline{I_{N}}}\left\vert t_{m}a\right\vert ^{p}d\mu =\overset{N-2}{%
\underset{k=0}{\sum }}\overset{N-1}{\underset{l=k+1}{\sum }}%
\sum\limits_{x_{j}=0,\text{ }j\in \{l+1,\dots
,N-1\}}^{m_{j-1}}\int_{I_{N}^{k,l}}\left\vert t_{m}a\right\vert ^{p}d\mu +%
\overset{N-1}{\underset{k=0}{\sum }}\int_{I_{N}^{k,N}}\left\vert
t_{m}a\right\vert ^{p}d\mu  \label{7aaa}
\end{equation}%
\begin{equation}
\leq c\overset{N-2}{\underset{k=0}{\sum }}\overset{N-1}{\underset{l=k+1}{%
\sum }}\frac{m_{l+1}\dotsm m_{N-1}}{M_{N}}\left( M_{l}M_{k}\right)
^{p}M_{N}^{1-2p}+\overset{N-1}{\underset{k=0}{\sum }}\frac{1}{M_{N}}%
M_{k}^{p}M_{N}^{1-p}  \notag
\end{equation}%
\begin{equation*}
\leq cM_{N}^{1-2p}\overset{N-2}{\underset{k=0}{\sum }}\overset{N-1}{\underset%
{l=k+1}{\sum }}\frac{\left( M_{l}M_{k}\right) ^{p}}{M_{l}}+\overset{N-1}{%
\underset{k=0}{\sum }}\frac{M_{k}^{p}}{M_{N}^{p}}\leq cM_{N}^{1-2p}.
\end{equation*}

Moreover, according to (\ref{7aaa}), we get that%
\begin{equation*}
\overset{\infty }{\underset{m=M_{N}+1}{\sum }}\frac{\int_{\overline{I_{N}}%
}\left\vert t_{m}a\right\vert ^{p}d\mu }{m^{2-2p}}\leq \overset{\infty }{%
\underset{m=M_{N}+1}{\sum }}\frac{cM_{N}^{1-2p}}{m^{2-2p}}<c<\infty ,\text{
\ }\left( 0<p<1/2\right) .
\end{equation*}%
Now, by combining this estimate with (\ref{14b}) we obtain (\ref{14c}) so
the proof of part a) is complete.

Let $p=1/2$ and $t_{n}$ be Nörlund means, with non-decreasing coefficients $%
\{q_{k}:k\geq 0\}$, satisfying condition (\ref{fn01}). We can write that 
\begin{equation}
\left\vert t_{m}a\left( x\right) \right\vert \leq \int_{I_{N}}\left\vert
a\left( t\right) \right\vert \left\vert F_{m}\left( x-t\right) \right\vert
d\mu \left( t\right)  \label{14aa}
\end{equation}%
\begin{equation*}
\leq \left\Vert a\right\Vert _{\infty }\int_{I_{N}}\left\vert F_{m}\left(
x-t\right) \right\vert d\mu \left( t\right) \leq
M_{N}^{2}\int_{I_{N}}\left\vert F_{m}\left( x-t\right) \right\vert d\mu
\left( t\right) .
\end{equation*}

Let $x\in I_{N}^{k,l},\,0\leq k<l<N.$ Then, in the view of Lemma \ref{lemma5}
we get that 
\begin{equation}
\left\vert t_{m}a\left( x\right) \right\vert \leq \frac{cM_{l}M_{k}M_{N}}{m}.
\label{13q1}
\end{equation}

Let $x\in I_{N}^{k,N}.$ Then, according to Lemma \ref{lemma5} we obtain that 
\begin{equation}
\left\vert t_{m}a\left( x\right) \right\vert \leq cM_{k}M_{N}.  \label{13q2}
\end{equation}

By combining (\ref{1.1}), (\ref{14aa}), (\ref{13q1}) and (\ref{13q2}) we
obtain that%
\begin{equation*}
\int_{\overline{I_{N}}}\left\vert t_{m}a\left( x\right) \right\vert
^{1/2}d\mu \left( x\right)
\end{equation*}%
\begin{equation}
\leq c\overset{N-2}{\underset{k=0}{\sum }}\overset{N-1}{\underset{l=k+1}{%
\sum }}\frac{m_{l+1}\dotsm m_{N-1}}{M_{N}}\frac{\left( M_{l}M_{k}\right)
^{1/2}M_{N}^{1/2}}{m^{1/2}}+\overset{N-1}{\underset{k=0}{\sum }}\frac{1}{%
M_{N}}M_{k}^{1/2}M_{N}^{1/2}  \notag
\end{equation}%
\begin{equation*}
\leq M_{N}^{1/2}\overset{N-2}{\underset{k=0}{\sum }}\overset{N-1}{\underset{%
l=k+1}{\sum }}\frac{\left( M_{l}M_{k}\right) ^{1/2}}{m^{1/2}M_{l}}+\overset{%
N-1}{\underset{k=0}{\sum }}\frac{M_{k}^{1/2}}{M_{N}^{1/2}}\leq \frac{%
cM_{N}^{1/2}N}{m^{1/2}}+c.
\end{equation*}

It follows that%
\begin{equation}
\frac{1}{\log n}\overset{n}{\underset{m=M_{N}+1}{\sum }}\frac{\int_{%
\overline{I_{N}}}\left\vert t_{m}a\left( x\right) \right\vert ^{1/2}d\mu
\left( x\right) }{m}\leq \frac{1}{\log n}\overset{n}{\underset{m=M_{N}+1}{%
\sum }}\left( \frac{cM_{N}^{1/2}N}{m^{3/2}}+\frac{c}{m}\right) <c<\infty .
\label{15b}
\end{equation}

The proof of part b) is completed by just combining (\ref{14b}) and (\ref%
{15b}).
\end{proof}

\begin{proof}[\textbf{Proof of Theorem \protect\ref{theorem3fejermax2}}]
Since $t_{n}$ is bounded from $L_{\infty }$ to $L_{\infty }$ (the
boundedness follows from (\ref{fn6})), by Lemma \ref{lemma2.2}, the proof of
Theorem \ref{theorem3fejermax2} will be complete, if we show that%
\begin{equation*}
\int\limits_{\overline{I}_{N}}\left( \underset{n\in \mathbb{N}}{\sup }\frac{%
\left\vert t_{n}a\right\vert }{\log ^{2\left[ 1/2+p\right] }\left(
n+1\right) \left( n+1\right) ^{1/p-2}}\right) ^{p}d\mu \leq c<\infty 
\end{equation*}%
for every p-atom $a,$ where $I$ denotes the support of the atom$.$ Let $a$
be an arbitrary p-atom, with support$\ I$ and $\mu \left( I\right)
=M_{N}^{-1}.$ Analogously to in the proof of Theorem \ref%
{theorem2fejerstrong} we may assume that $I=I_{N}$ and $n>M_{N}$.

Let $x\in I_{N}^{k,l},\,0\leq k<l\leq N.$ Then, by combining (\ref{14a}) and
Lemma \ref{lemma5aaa}, (see also (\ref{12q1})) we get that 
\begin{equation}
\frac{\left\vert t_{n}\left( a\left( x\right) \right) \right\vert }{\left(
n+1\right) ^{1/p-2}\log ^{2\left[ 1/2+p\right] }\left( n+1\right) }\leq 
\frac{M_{N}^{1/p}}{M_{N}^{1/p-2}N^{2\left[ 1/2+p\right] }}%
\int_{I_{N}}\left\vert \frac{1}{Q_{n}}\overset{n}{\underset{j=M_{N}}{\sum }}%
q_{n-j}D_{j}\left( x-t\right) \right\vert d\mu \left( t\right)   \label{1112}
\end{equation}%
\begin{equation*}
\leq \frac{cM_{N}^{1/p}}{M_{N}^{1/p-2}N^{2\left[ 1/2+p\right] }}\frac{%
M_{l}M_{k}}{M_{N}^{2}}=\frac{cM_{l}M_{k}}{N^{2\left[ 1/2+p\right] }}.
\end{equation*}%
By combining (\ref{1.1}) and (\ref{1112}) we obtain that (see \cite{tep2}
and \cite{tep3}) 
\begin{equation*}
\int_{\overline{I_{N}}}\left\vert t^{\ast }a\right\vert ^{p}d\mu =\overset{%
N-2}{\underset{k=0}{\sum }}\overset{N-1}{\underset{l=k+1}{\sum }}%
\sum\limits_{x_{j}=0,j\in
\{l+1,...,N-1\}}^{m_{j-1}}\int_{I_{N}^{k,l}}\left\vert t^{\ast }a\right\vert
^{p}d\mu +\overset{N-1}{\underset{k=0}{\sum }}\int_{I_{N}^{k,N}}\left\vert
t^{\ast }a\right\vert ^{p}d\mu 
\end{equation*}%
\begin{equation*}
\leq \overset{N-2}{\underset{k=0}{\sum }}\overset{N-1}{\underset{l=k+1}{\sum 
}}\frac{m_{l+1}...m_{N-1}}{M_{N}}\left( \frac{M_{l}M_{k}}{N^{2\left[ 1/2+p%
\right] }}\right) ^{p}+\overset{N-1}{\underset{k=0}{\sum }}\frac{1}{M_{N}}%
\left( \frac{M_{N}M_{k}}{N^{2\left[ 1/2+p\right] }}\right) ^{p}
\end{equation*}%
\begin{equation*}
\leq \frac{c}{N^{2\left[ 1/2+p\right] }}\overset{N-2}{\underset{k=0}{\sum }}%
\overset{N-1}{\underset{l=k+1}{\sum }}\frac{\left( M_{l}M_{k}\right) ^{p}}{%
M_{l}}+\frac{c}{M_{N}^{1-2p}N^{2p\left[ 1/2+p\right] }}\overset{N-1}{%
\underset{k=0}{\sum }}\frac{M_{k}^{p}}{M_{N}^{p}}<\infty .
\end{equation*}

The proof is complete.
\end{proof}

\textbf{A final remark: }\ Several of the operators considered in this
paper, e.g. those described by the Nörlund means are called Hardy type
operators in the literature. The mapping properties of such operators,
especially between weighted Lebegue spaces, is much studied in the
literature, see e.g. the books \cite{kp} and \cite{mkp} and the references
there. Such complimentary information can be of interest for further studies
of the inequalities considered in this paper.

\end{document}